\let\origsection=\section \def\section{\@ifstar{\origsection*}{\mysection}} 
\def\mysection{\@startsection{section}{1}\z@{.7\linespacing\@plus\linespacing}{.5\linespacing}{\normalfont\scshape\centering\S}}
\renewcommand{\PrintDOI}[1]{\doi{#1}}
\def\rmlabel{\upshape({\itshape \roman*\,})}
\def\Alabel{\upshape({\itshape \Alph*\,})}
\let\polishlcross=\l
\def\l{\ifmmode\ell\else\polishlcross\fi}
\def\tand{\ \text{and}\ }
\def\qqand{\qquad\text{and}\qquad}
\let\emptyset=\varnothing
\let\setminus=\smallsetminus
\let\backslash=\smallsetminus
\def\moverlay{\mathpalette\mov@rlay}
\def\mov@rlay#1#2{\leavevmode\vtop{   \baselineskip\z@skip \lineskiplimit-\maxdimen
   \ialign{\hfil$\m@th#1##$\hfil\cr#2\crcr}}}
\newcommand{\charfusion}[3][\mathord]{
    #1{\ifx#1\mathop\vphantom{#2}\fi
        \mathpalette\mov@rlay{#2\cr#3}
      }
    \ifx#1\mathop\expandafter\displaylimits\fi}
\newtheorem{theorem}{Theorem}[section] 
\newtheorem{lemma}[theorem]{Lemma}
\newtheorem{claim}{Claim}[section]
\newtheoremstyle{definition}  {4pt}  {4pt}  {\sl}  {}  {\bfseries}  {.}  {.5em}          {}
\theoremstyle{definition}
\newtheorem{definition}[theorem]{Definition}
\theoremstyle{remark}
\newtheoremstyle{introthms}  {3pt}  {3pt}  {\itshape}  {}  {\bfseries}  {.}  {.5em}          {\thmnote{#3}}\theoremstyle{introthms}
\let\eps=\varepsilon
\let\theta=\vartheta
\let\rho=\varrho
\let\phi=\varphi
\newcommand{\mpmod}[1]{(\text{mod}\,#1)}
\def\cGnk{\mathcal{G}_{n,k}}
\def\cTk{\mathcal{T}_{k}}
\begin{document}

\title[Graphs with given odd girth and large minimum degree]{On the structure of graphs with given odd girth \\ and large minimum degree}

\author[Silvia Messuti]{Silvia Messuti}
\address{Fachbereich Mathematik, Universit\"at Hamburg, Hamburg, Germany}
\email{\{silvia.messuti\,|\,schacht\}@math.uni-hamburg.de}

\author[Mathias Schacht]{Mathias Schacht}
\thanks{The second author was supported through the Heisenberg-Programme of the
DFG}

\begin{abstract}
We study minimum degree conditions for which a graph with given odd girth
has a simple structure. For example, the classical work of Andr\'asfai,
Erd\H os, and S\'os
implies that every $n$-vertex graph with odd girth $2k+1$ and minimum degree
bigger than~$\frac{2}{2k+1}n$ must be bipartite.
We consider graphs with a weaker condition on the minimum degree.
Generalizing results of H\"aggkvist and of H\"aggkvist and Jin for the cases $k=2$ and $3$,
we show that every $n$-vertex graph with odd girth $2k+1$ and
minimum degree bigger than $\frac{3}{4k}n$ is homomorphic to the cycle of length $2k+1$.
This is best possible in the sense that there are graphs with minimum degree
$\frac{3}{4k}n$ and odd girth $2k+1$ which are not homomorphic to the cycle of length $2k+1$.
Similar results were obtained by Brandt and Ribe-Baumann.
\end{abstract} 

\maketitle

\section{Introduction}
We consider finite and simple graphs without loops and for any notation not defined here we refer to
the textbooks~\cites{Bo98,BM08,Di10}.
In particular, we denote by $K_r$ the complete graph on $r$ vertices and by $C_r$ a cycle of length $r$.
A \emph{homomorphism} from a graph $G$ into a graph $H$ is a
mapping $\phi\colon V(G)\to V(H)$ with the property  that $\{\phi(u),\phi(w)\}\in E(H)$ whenever $\{u,w\}\in E(G)$.
We say that $G$ is \emph{homomorphic} to $H$ if there exists a homomorphism from~$G$ into~$H$.
Furthermore, a graph~$G$ is a \emph{blow-up} of a graph $H$, if there exists a surjective homomorphism $\phi$ from $G$ into~$H$, 
but for any proper supergraph of~$G$ on the 
same vertex set the mapping $\phi$ is not a homomorphism into $H$ anymore. In particular, a graph $G$ is homomorphic to~$H$ if and only if 
it is a subgraph of a suitable blow-up of~$H$. Moreover, we say a blow-up~$G$ of $H$ is \emph{balanced} if
the homomorphism $\phi$ signifying that $G$ is a blow-up has the additional property that 
$|\phi^{-1}(u)|=|\phi^{-1}(u')|$ for all vertices $u$ and $u'$ of $H$.

Homomorphisms can be used to capture structural properties of graphs.
For example, a graph is $k$-colourable if and only if it is homomorphic to $K_{k}$.
Furthermore many results in extremal graph theory establish relationships between 
the minimum degree of a graph and the existence of a given subgraph.
The following theorem of Andr\'asfai, Erd\H{o}s, and S\'os~\cite{AES74} is a classical result of that type.

\begin{theorem}[Andr\'asfai, Erd\H{o}s \& S\'os]
\label{thm:AES}
For every integer $r\geq3$ and for every $n$-vertex graph $G$ the following holds. If~$G$ has minimum degree $\delta(G)>\frac{3r-7}{3r-4}n$ and $G$ contains no copy of $K_r$, then $G$ is $(r-1)$-colourable.\qed
\end{theorem}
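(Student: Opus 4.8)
The plan is to argue by induction on $r$, the case $r=3$ being the classical statement that a triangle-free graph $G$ with $\delta(G)>\tfrac{2}{5}n$ is bipartite. For that base case, suppose $G$ is not bipartite and fix a shortest odd cycle $C$, of length $2k+1\ge 5$. First, $C$ has no chord: a chord splits $C$ into two shorter cycles whose lengths add up to an odd number, so one of them is a shorter odd cycle. Second, no vertex $w\notin V(C)$ has more than two neighbours on $C$: two neighbours of $w$ at distance $1$ along $C$ are forbidden by triangle-freeness, while if $w$ had $m\ge 3$ neighbours on $C$, then, writing $g_1,\dots,g_m\ge 2$ for the cyclic gaps between consecutive neighbours, each cycle formed by $w$ together with a shortest arc between two consecutive neighbours has length $g_i+2<2k+1$ and hence must be even, forcing every $g_i$ even and contradicting $\sum_i g_i=2k+1$. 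Since each vertex of $C$ also has exactly two neighbours on $C$, a double count of the edges incident with $V(C)$ gives $\sum_{v\in V(C)}d_G(v)\le 2n$, whereas $\delta(G)>\tfrac{2}{5}n$ forces $\sum_{v\in V(C)}d_G(v)>\tfrac{2}{5}(2k+1)n\ge 2n$, a contradiction.

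For the inductive step ($r\ge 4$) the key point is that the hypothesis is inherited by neighbourhoods. If $\delta(G)>\tfrac{3r-7}{3r-4}n$ and $v\in V(G)$, then $G[N(v)]$ is $K_{r-1}$-free and, for every $x\in N(v)$,
\[
  d_{G[N(v)]}(x)\;\ge\;d_G(x)+d_G(v)-n\;>\;d_G(v)-\tfrac{3n}{3r-4}\;\ge\;\tfrac{3(r-1)-7}{3(r-1)-4}\,\bigl|N(v)\bigr|,
\]
the last inequality being $d_G(v)\ge\delta(G)>\tfrac{3r-7}{3r-4}n$ rearranged. Thus $G[N(v)]$ satisfies the hypothesis of the theorem with $r$ replaced by $r-1$, so by the induction hypothesis it is $(r-2)$-colourable. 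Since $\tfrac{3r-7}{3r-4}$ increases with $r$, if $G$ itself happens to be $K_{r-1}$-free we may apply the induction hypothesis to $G$ directly; hence we may assume $G$ contains a copy $Q=\{u_1,\dots,u_{r-1}\}$ of $K_{r-1}$.

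Now fix an $(r-2)$-colouring of $G[N(u_1)]$. As $Q\setminus\{u_1\}$ is a $K_{r-2}$ inside $N(u_1)$, its $r-2$ vertices use all $r-2$ colours, so the colour classes are independent sets $D_2,\dots,D_{r-1}$ of $G$ with $u_j\in D_j$. Colouring $u_1$ with a new colour $1$ and each $D_j$ with colour $j$ gives a proper $(r-1)$-colouring of $G[N[u_1]]$, and it remains to extend it over $U:=V\setminus N[u_1]$. By the degree bound $|U|<\tfrac{3n}{3r-4}$, each $u\in U$ has more than $\tfrac{3r-10}{3r-4}n$ neighbours inside $N(u_1)$, and --- crucially --- colour $1$ is available to every $u\in U$ because $u\not\sim u_1$.

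This extension is the heart of the matter and the step I expect to be the main obstacle. A vertex $u\in U$ is constrained only if its neighbourhood meets every class $D_j$, in which case it is forced onto colour $1$; one must show that the set of such vertices together with $u_1$ is independent --- or can otherwise be absorbed --- and then finish by colouring the remaining vertices of $U$, each of which still has an admissible colour, since the list of colours not excluded by its neighbours in $N(u_1)$ contains colour $1$. This is precisely where the \emph{strict} inequality $\delta(G)>\tfrac{3r-7}{3r-4}n$ has to be spent: the blow-up of $K_{r-3}\vee C_5$ in which each vertex of $K_{r-3}$ is replaced by an independent set of size $3t$ and each vertex of $C_5$ by one of size $t$ is $K_r$-free, has minimum degree exactly $\tfrac{3r-7}{3r-4}n$, and has chromatic number $r$; so every inequality above is tight at the extremal configuration and the small slack afforded by strictness must be tracked with care.
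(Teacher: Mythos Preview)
The paper does not give a proof of Theorem~\ref{thm:AES}; it is quoted as a classical result of Andr\'asfai, Erd\H{o}s, and S\'os with the reference~\cite{AES74} and closed with a \qed. So there is no ``paper's proof'' to compare against, and your proposal has to stand on its own.

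Your base case $r=3$ is clean and correct: the chordless shortest odd cycle plus the ``at most two neighbours on $C$'' count gives $\sum_{v\in V(C)}d_G(v)\le 2n$ against the lower bound $(2k+1)\cdot\tfrac{2n}{5}\ge 2n$, and strictness of $\delta(G)>\tfrac{2n}{5}$ produces the contradiction. The inheritance of the minimum-degree hypothesis to $G[N(v)]$ is also computed correctly.

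The genuine gap is exactly where you flag it: the extension of the $(r-1)$-colouring from $N[u_1]$ to $U=V\setminus N[u_1]$. You reduce the question to showing that the vertices of $U$ whose neighbourhood meets every colour class $D_2,\dots,D_{r-1}$ form, together with $u_1$, an independent set --- but you do not prove this, and it is not clear that your setup yields it. Two adjacent vertices $u,u'\in U$ could each have neighbours in all $D_j$, and nothing you have established rules that out; the $K_r$-freeness of $G$ gives you no immediate leverage here, since $u,u'\notin N(u_1)$ and the witnesses in the $D_j$ need not be pairwise adjacent. Even if that independence held, you would still need to colour the remaining vertices of $U$ consistently with one another, not only with $N(u_1)$: $G[U]$ can contain edges, and a vertex $u\in U$ missing only $D_j$ may be adjacent to another $u'\in U$ missing only the same $D_j$, forcing one of them onto colour~$1$ and re-opening the independence question. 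In short, the ``greedy from the neighbourhood of a clique vertex'' scheme does not close by itself; the original proof in~\cite{AES74} uses a more global structural argument (essentially analysing how a maximum $(r-1)$-partite subgraph sits inside $G$), and some such extra ingredient is needed here.
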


In the special case $r=3$, Theorem~\ref{thm:AES} states that every triangle-free $n$-vertex graph with 
minimum degree greater than $2n/5$ is homomorphic to $K_2$. Several extensions of this result and related questions were studied.
For example, motivated by a question of Erd\H os and Simonovits~\cite{ES73} the chromatic number of triangle-free graphs 
$G=(V,E)$ with minimum degree $\delta(G)>|V|/3$ was thoroughly investigated in~\cites{Br02,CJK97,Hae82,Ji95,Th02} and it was recently shown by Brandt and Thomass\'e~\cite{BT} that it is at most four.

Another related line of research (see, e.g.,~\cites{CJK97,Hae82,Ji95,Lu06}) 
concerned the question for which minimum degree condition a triangle-free graph~$G$ 
is homomorphic to a graph $H$ of bounded size, which is triangle-free itself. In particular,
H\"aggkvist~\cite{Hae82} showed that triangle-free graphs $G=(V,E)$ with $\delta(G)>3|V|/8$
are homomorphic to~$C_5$. In other words, such a graph $G$ is a subgraph of suitable blow-up
of $C_5$. This can be viewed as an extension of Theorem~\ref{thm:AES} for $r=3$, since balanced blow-ups of~$C_5$ show that the degree condition $\delta(G)>2|V|/5$ is sharp there.
Strengthening the assumption of triangle-freeness to graphs of higher odd girth, allows us to consider graphs with a more relaxed 
minimum degree condition. In this direction H\"aggkvist and Jin~\cite{HJ98} showed that 
graphs $G=(V,E)$ which contain no odd cycle of length three and five and with minimum degree 
$\delta(G)>|V|/4$ are homomorphic to $C_7$.

We generalize those results to arbitrary odd girth, where we say that a graph $G$ has \emph{odd girth} at least $g$, if it contains no 
odd cycle of length less than~$g$.

\begin{theorem}\label{thm:main}
	For every integer $k\geq2$ and for every $n$-vertex graph $G$ the following holds. 
	If~$G$ has minimum degree $\delta(G)>\frac{3n}{4k}$ 
	and $G$ has odd girth at least $2k+1$, then $G$ is homomorphic to $C_{2k+1}$.
\end{theorem}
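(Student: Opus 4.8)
The plan is to locate a shortest odd cycle, read off a candidate homomorphism from its geometry, and then spend the degree hypothesis forcing that candidate to be an actual homomorphism. First dispose of the trivial reductions: we may take $G$ connected, and if $G$ is bipartite then $G\to K_2$ and mapping $K_2$ onto an edge of $C_{2k+1}$ gives $G\to C_{2k+1}$. So assume $G$ is not bipartite and let $2r+1$ be its odd girth; then $r\ge k$ and $\delta(G)>\tfrac{3n}{4k}\ge\tfrac{3n}{4r}$, so it suffices to prove the statement in the sharper form where the odd girth is \emph{exactly} $2k+1$ (that form, applied with parameter $r$, yields $G\to C_{2r+1}$, which we compose with the standard homomorphism $C_{2r+1}\to C_{2k+1}$ that exists since $r\ge k$). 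Fix a shortest odd cycle $C=c_0c_1\cdots c_{2k}$, indices modulo $2k+1$. Our goal is a partition $V(G)=V_0\dcup V_1\dcup\cdots\dcup V_{2k}$ with $c_i\in V_i$ and all edges running between consecutive classes; such a partition is exactly a homomorphism $G\to C_{2k+1}$.

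The second step collects ``no short odd closed walk'' lemmas forced by minimality of $C$. The basic one: each $v\notin V(C)$ has at most two neighbours on $C$, and two neighbours of $v$ on $C$ must have the form $c_i,c_{i+2}$, since any other configuration completes an odd closed walk --- hence a short odd cycle --- of length less than $2k+1$; such a $v$ is then forced into $V_{i+1}$. I would push this to a local coordinate: for every vertex $v$, the distances $d_i:=\operatorname{dist}_G(v,c_i)$ change by at most one between consecutive indices, and the odd-girth hypothesis should confine the indices where $d_i$ is minimal to one short arc of $C$, away from which the $d_i$ increase to a single ``peak''. This should single out a well-defined $\pi(v)\in\mathbb Z_{2k+1}$ recording the position of $v$ relative to $C$, and one takes $V_i:=\pi^{-1}(i)$. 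The work here is to verify that $\pi$ is defined on all of $V(G)$ and is a homomorphism --- that the breadth-first layers growing out of $C$ fold consistently onto $C_{2k+1}$ and that no edge is forced between non-consecutive classes.

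The third step is where $\delta(G)>\tfrac{3n}{4k}$ is consumed, and I expect this to be the main obstacle. Any obstruction to the previous step is a vertex (or set of vertices) lying too far from $C$, or an edge that would have to jump classes, and I would rule these out by a counting argument playing the minimum degree against the structural constraints. Schematically: partition $V(G)\setminus V(C)$ according to attachment type to $C$ (no neighbour on $C$, exactly one neighbour $c_i$, or the forced pair $c_i,c_{i+2}$); observe via further short-odd-walk arguments that incompatible types cannot be adjacent and that each type contributes only a bounded amount to $\sum_{i\in\mathbb Z_{2k+1}}\deg(c_i)$ --- or, iterating the same idea outward, to the degree sums over successive breadth-first layers --- and then show that a bad vertex or edge would force that sum below $(2k+1)\cdot\tfrac{3n}{4k}$, contradicting the hypothesis. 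Balancing these estimates is precisely what produces the constant $\tfrac34$, and since the sharpness examples (for instance balanced blow-ups of the M\"obius ladder $M_{4k}$, which has odd girth $2k+1$ but is not homomorphic to $C_{2k+1}$) meet the bound with equality, there is no slack to exploit; carrying out this bookkeeping cleanly is the real content of the proof. A conceivable alternative is induction on $k$ from the cases $k=2$~\cite{Hae82} and $k=3$~\cite{HJ98}, shortening a fixed shortest odd cycle by two while preserving the odd girth and the relative degree bound; but naive contractions along the cycle can create triangles, so such a surgery would itself have to be chosen using the degree hypothesis, and it is unclear this is easier than the direct count. Once $\pi$ is shown to be a homomorphism onto $C_{2k+1}$, the proof is complete.
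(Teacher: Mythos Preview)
Your outline has the right shape --- fix a shortest odd cycle, read off a candidate partition, spend the degree hypothesis on a count --- but the count you sketch cannot produce the constant $\tfrac{3}{4k}$, and the paper's argument differs from yours in two essential respects that you do not anticipate.

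First, counting on $C$ alone yields nothing: every vertex has at most two neighbours on a shortest odd cycle, so $\sum_i\deg(c_i)\le 2n$, whereas the hypothesis gives only $\sum_i\deg(c_i)>(2k+1)\tfrac{3n}{4k}$, and $\tfrac{3(2k+1)}{4k}<2$ for every $k\ge2$. The contradiction the paper actually obtains needs a configuration on at least $4k$ vertices in which every $x\in V(G)$ has at most \emph{three} neighbours, so that $4k\cdot\tfrac{3n}{4k}<\sum\deg\le 3n$ is tight. To manufacture such configurations the paper first passes to an \emph{edge-maximal} graph of the same odd girth --- a move you never make --- so that every non-edge is witnessed by an even path of length at most $2k-2$; it then proves two forbidden-subgraph lemmas: no induced $C_6$ with exactly one diagonal (the two missing diagonals, via their witnessing paths, extend this to a $4k$-cycle with three diagonals), and no ``$(2k+1)$-tetrahedron'' (three $(2k+1)$-cycles through a common centre, meeting pairwise in spokes, hence on $\ge 4k$ vertices). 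Neither the edge-maximality device nor these specific $4k$-vertex gadgets emerges from your BFS-layer picture, and they are where the constant $\tfrac{3}{4k}$ really lives.

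Second, the paper does not attempt a distance-based coordinate $\pi$. It instead takes a \emph{vertex-maximal} blow-up $B\subseteq G$ of $C_{2k+1}$ and shows that any $x\notin V(B)$ triggers one of the two forbidden configurations: if $x$ has neighbours in two classes $A_{i-1},A_{i+1}$ but misses some vertex of $A_{i-1}\cup A_{i+1}$, one reads off an induced $C_6$ with exactly one diagonal; if $x$ has neighbours in only one class $A_i$, the short even paths supplied by edge-maximality (from $x$ to $a_{i-2}$ and to $a_{i+2}$) assemble a tetrahedron centred at $a_i$. Your proposal to make $\pi$ well-defined and edge-preserving on vertices far from $C$ is exactly the hard part, and you give no concrete mechanism by which the degree bound enters there; the paper sidesteps the issue entirely by letting the blow-up swallow vertices one at a time.
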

Note that the degree condition given in Theorem~\ref{thm:main} is best possible as 
the following example shows. For an even integer $r\geq 6$ we denote by $M_r$ the so-called
\emph{M\"obius ladder} (see, e.g.,~\cite{GH67}), i.e., the graph obtained by adding all diagonals to a cycle of 
length~$r$, where a diagonal connects vertices of distance $r/2$ in the cycle. One may check that
$M_{4k}$ has odd girth $2k+1$, but it is not homomorphic to $C_{2k+1}$. Moreover, $M_{4k}$ is 
$3$-regular and, consequently, balanced blow-ups of $M_{4k}$ show that 
the degree condition in Theorem~\ref{thm:main} is best possible when $n$ is divisible by $4k$.

We also remark that Theorem~\ref{thm:main} implies that every graph with odd girth at least $2k+1$ 
and minimum degree bigger than $\frac{3n}{4k}$ contains an independent set of size at least $\frac{kn}{2k+1}$.
This answers affirmatively a question of Albertson, Chan, and Haas~\cite{ACH93}.
Similar results were obtained by Brandt and Ribe-Baumann (unpublished).

\section{Forbidden subgraphs}

In this section we introduce two lemmas, Lemmas~\ref{lem:C6} and~\ref{lem:3cycles} below, needed for the proof of 
Theorem~\ref{thm:main} given in Section~\ref{sec:pf_thm}.
Roughly speaking, in each lemma we show that certain configurations cannot occur in edge-maximal 
graphs considered in Theorem~\ref{thm:main}.

We say that a graph $G$ with odd girth at least $2k+1$ is \emph{edge-maximal} if adding any edge to $G$ (by keeping the same vertex set)
yields 
an odd cycle of length at most~$2k-1$. We denote by $\cGnk$ all edge-maximal $n$-vertex graphs satisfying the
assumptions of the main theorem, i.e., for integers $k\geq 2$ and $n$ we set
\[
	\cGnk\!=\!\{G\!=\!(V,E)\colon |V|\!=\!n\,,\ \delta(G)\!>\!\tfrac{3n}{4k}\,,
		\tand \text{$G$ is edge-maximal with odd girth $2k+1$}\}\,.
\]

\subsection{Cycles of length six with precisely one diagonal}
For $k$ fixed, we say an odd cycle
is \emph{short} if its length is at most $2k-1$.
A chord in a cycle of even length $2j$ is a \emph{diagonal} if it joins two vertices at distance $j$ in the cycle.
Given a walk $W$ we define its \emph{length} $\l(W)$ as the number of edges, each counted as many times as it appears in the walk.
Hence, the lengths of paths and cycles coincide with their number of edges.

\begin{lemma}\label{lem:C6}
Let $\Phi$ denote the graph obtained from $C_6$ by adding exactly one diagonal.
For all integers $k\geq2$ and $n$ and for every $G\in\mathcal{G}_{n,k}$ we have that $G$ does not contain 
an induced copy of $\Phi$.
\end{lemma}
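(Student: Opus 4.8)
The plan is to argue by contradiction: suppose $G\in\cGnk$ contains an induced copy of $\Phi$, and exhibit an edge whose addition creates no short odd cycle, contradicting edge-maximality. Label the $C_6$ as $v_1v_2v_3v_4v_5v_6$ with the diagonal $v_1v_4$, so that $\{v_2,v_6\}$ and $\{v_3,v_5\}$ are the two non-adjacent pairs at distance $2$, while $v_2v_6$ and $v_3v_5$ are the candidate chords we are *not* allowed to add (their addition would make a triangle with $v_1$, resp.\ $v_4$); the interesting non-edges are $v_1v_3$, $v_1v_5$, $v_4v_2$, $v_4v_6$ (each would create a triangle or a $C_4\cup$triangle situation — we must be careful here) and $v_2v_4$, $v_6v_4$, etc. Actually the cleanest candidate is a non-edge between two vertices of $\Phi$ at cycle-distance $3$ on the $C_6$, namely $v_2v_5$ or $v_3v_6$: adding $v_2v_5$ together with the path $v_2v_1v_4v_5$ of length $3$ would give a $C_4$, not an odd cycle, while the path $v_2v_3v_4v_5$ also has length $3$; one must check which short odd cycles could be created and rule them out using the odd-girth hypothesis on $G$ itself. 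The key point is that, since the copy of $\Phi$ is \emph{induced}, the only short odd cycles through a newly added chord must use edges of $G$ outside $\Phi$, and one uses the neighbourhood structure forced by $\delta(G)>\tfrac{3n}{4k}$ to derive a contradiction.

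More precisely, here is the route I expect to work. For a vertex $u$ let $N_i(u)$ denote the set of vertices at distance exactly $i$ from $u$; the odd-girth hypothesis gives that $N_0(u),N_1(u),\dots,N_{k-1}(u)$ behave ``bipartitely'' in the sense that there are no edges inside $N_i(u)$ for $i<k$ is \emph{false} in general, but there are no edges between $N_i(u)$ and $N_j(u)$ with $i+j$ odd and $i+j\le 2k-2$ other than the ``expected'' ones — I would set up exactly the parity bookkeeping needed. Then, assuming the induced $\Phi$, consider the two vertices $v_1$ and $v_4$ joined by the diagonal: they have a common neighbour on each side ($v_2,v_6$ and $v_3,v_5$), so $\{v_1,v_4\}$ together with its two ``short'' connections forces several vertices of $\Phi$ into small-distance classes of each other. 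The contradiction should come by adding the edge $v_1v_4$'s ``mirror'' — concretely, I would try to add a chord turning $\Phi$ into a configuration where the new edge closes only even closed walks, then show any odd closed walk through it has length $\ge 2k+1$ because it would have to leave $V(\Phi)$ and traverse a path in $G$ whose length is bounded below by the odd-girth assumption.

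The main obstacle, and the step I expect to require the real work, is verifying that the chosen non-edge $e$ of $\Phi$ can be added \emph{without} creating a short odd cycle in all of $G$ — not just within $\Phi$. A short odd cycle through $e$ decomposes as $e$ plus an odd-or-even path $P$ in $G$ between its endpoints with $\l(P)\le 2k-2$; since $\Phi$ is induced, $P$ is either one of the two or three short paths inside $\Phi$ (whose lengths and parities I can compute directly — these are small, so they either already give a short odd cycle contradiction with existing $\Phi$ or are even) or $P$ must use an edge leaving $V(\Phi)$. In the latter case I would use the distance/parity structure coming from $\delta(G)>\tfrac{3n}{4k}$: a standard counting argument (the kind underlying the Andr\'asfai--Erd\H os--S\'os bound and H\"aggkvist's $C_5$ theorem) shows the ``short-distance'' neighbourhoods of vertices of $\Phi$ are large and overlap in a controlled way, and any such detour path of length $\le 2k-2$ between two vertices of $\Phi$ would combine with edges of $\Phi$ to produce a genuine short odd cycle in $G$, contradiction. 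Assembling this case analysis carefully — choosing the right non-edge to add, and the right pair of paths in $\Phi$ to pair it with — is the crux; the rest is bookkeeping with parities of walk-lengths modulo $2$.
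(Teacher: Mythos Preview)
Your strategy is inverted, and this inversion hides the real difficulty. You propose to pick a non-edge $e$ of $\Phi$ and show that adding it creates no short odd cycle, contradicting edge-maximality. But edge-maximality says precisely that \emph{every} non-edge, once added, closes a short odd cycle; equivalently, for every non-edge $\{u,w\}$ there is an even path of length $\le 2k-2$ from $u$ to $w$ in $G$. So such witnessing paths are guaranteed to exist, and the task is to \emph{exploit} them, not to argue them away. Concretely, take your candidate $e=\{v_2,v_5\}$: edge-maximality hands you an even path $P$ with $\l(P)\le 2k-2$ between $v_2$ and $v_5$. Your plan is to concatenate $P$ with a length-$3$ path inside $\Phi$ (say $v_2v_1v_4v_5$) and obtain a short odd closed walk already present in $G$. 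The resulting walk, however, has odd length $\l(P)+3\le 2k+1$, and length $2k+1$ is \emph{permitted} by the odd-girth hypothesis. Hence in the critical case $\l(P)=2k-2$ you get no contradiction at all, and nothing in your outline addresses this case. The allusion to ``standard counting'' on neighbourhoods of $\Phi$ cannot rescue this: $\Phi$ has only six vertices, while the minimum-degree bound $\delta(G)>\tfrac{3n}{4k}$ is calibrated to structures of size $4k$.

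The paper's proof embraces exactly the case you cannot handle. It looks at the two \emph{missing} diagonals of the $C_6$ (not the present one), uses edge-maximality to produce two even witnessing paths, and shows both must have length exactly $2k-2$ with interiors disjoint from $V(\Phi)$ and from each other. Gluing these to $\Phi$ yields a graph $\Phi'$ on $4k$ vertices: a cycle of length $4k$ together with three diagonals. A parity/case analysis then shows that no vertex of $G$ can have four neighbours in $\Phi'$; double-counting edges between $V$ and $V(\Phi')$ gives
\[
3n \;=\; 4k\cdot\tfrac{3n}{4k} \;<\; \sum_{u\in V(\Phi')}|N_G(u)| \;=\; \sum_{x\in V}|N_G(x)\cap V(\Phi')| \;\le\; 3n,
\]
the desired contradiction. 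The missing idea in your proposal is precisely this passage from the $6$-vertex $\Phi$ to the $4k$-vertex $\Phi'$ via the witnessing paths; without it the degree hypothesis never enters with the right constant.
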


\begin{proof}
Suppose, contrary to the assertion, that $G=(V,E)$ contains $\Phi$ in an induced way, where
$V(\Phi)=\{a_i\colon  0 \leq i \leq 5\}\subseteq V$ is the vertex set and 
\[
	E(\Phi)=\{\{a_i,a_{i+1\mpmod{5}}\}\colon 0\leq i \leq 5\}\cup\{a_1,a_4\}\,.
\]
Note that in fact, the chords of the $C_6$ in $\Phi$ which are not diagonals would 
create triangles in~$G$ so assuming that $\Phi$ is induced in $G$ gives us only information concerning the 
non-existing two diagonals.
Since $G$ is edge-maximal, the non-existence of the diagonal between~$a_0$ and $a_3$ must be forced by the existence of an even 
path~$P_{03}$ which, together with $\{a_0,a_3\}$, would yield an odd cycle of length at most $2k-1$.
Consequently, the length of $P_{03}$ is at most $2k-2$.
Since $a_0$ and $a_3$ have distance three in $\Phi$, a shortest path between them in $\Phi$, together with $P_{03}$, results in a closed walk with odd length at most $2k+1$.

Recall that any odd closed walk is either an odd cycle or it contains a shorter odd cycle, it follows that $P_{03}$ has length exactly $2k-2$ and its inner vertices are not in~$\Phi$. The same reasoning can be applied to the other missing diagonal between~$a_2$ and~$a_5$ to show that there exists another even path $P_{25}$ of length $2k-2$ whose inner vertices are disjoint from~$V(\Phi)$. 

We show that $P_{03}$ and $P_{25}$ are vertex disjoint.
Suppose that $V(P_{03})\cap V(P_{25})\neq\emptyset$ and let $b$ be the first vertex in $P_{03}$ which is also a vertex of $P_{25}$, i.e., $b$ is the only vertex from~$a_0P_{03}b$ which is also contained in $P_{25}$.
Consider the walks 
\[
	W_{05}=a_0P_{03}bP_{25}a_5\qqand
	W_{23}=a_2P_{25}bP_{03}a_3\,,
\]
where we follow the notation from~\cite{Di10}, i.e., $W_{05}$ is the walk in $G$ which starts at~$a_0$
and follows the path $P_{03}$ up to the vertex $b$ from which the walk continues on the path $P_{25}$ up to the vertex $a_5$. 
Since $W_{05}$ and $W_{23}$ consist of the same edges (with same multiplicities) as $P_{03}$ and $P_{25}$ their lengths sum up to $4k-4$. Consequently, one of the walks, say $W_{05}$, has length at most $2k-2$.
If $W_{05}$ is even, then, together with the edge $\{a_0,a_5\}$, it yields an odd closed walk of length at most $2k-1$ and hence a short odd cycle.
Otherwise, if~$W_{05}$ and $W_{23}$ are odd, then also the walks
\[
	W_{02}=a_0P_{03}bP_{25}a_2\qqand
	W_{35}=a_3P_{03}bP_{25}a_5
\]
have an odd length.
This implies that one of them, say $W_{02}$, has odd length at most~$2k-3$. Together with the path $a_0a_1a_2$ this results into a closed walk with odd length at most $2k-1$ which yields the existence of a short odd cycle. Consequently, we derive a contradiction from 
the assumption that $P_{03}$ and $P_{25}$ are not vertex-disjoint.

Having established that $V(P_{03})\cap V(P_{25})=\emptyset$, we deduce that~$G$ contains the following graph $\Phi'$ consisting
of a cycle of length $4k$ 
\[a_0a_1a_2P_{25}a_5a_4a_3P_{03}a_0\]
with three diagonals $\{a_0,a_5\}$, $\{a_1,a_4\}$, and $\{a_2,a_3\}$. 

We remark that it follows from~\cite{HJ98}*{Lemma~2}
that such a graph $\Phi'$ cannot occur as a subgraph in any $G\in\cGnk$. However, for a self contained presentation 
we include a proof below.

We show that no vertex in $G$ can be joined to four vertices in $\Phi'$.
Suppose, for a contradiction, that there exists a vertex $x$ in $G$ such that $|N_G(x)\cap V(\Phi')|\geq4$.
Recall that $x$ can be joined to at most two vertices of a cycle of length $2k+1$ and, if so,  then these vertices must have distance two in that cycle.
Since each of the three diagonals splits the cycle of length $4k$ of $\Phi'$ into two cycles of length $2k+1$, we have that $x$ cannot have more than four neighbours in $\Phi'$. Moreover, the only way to pick four neighbours is to choose two vertices from each of these cycles and none from their intersection, i.e.\ the ends of the diagonal.
By applying this argument to each of the three diagonals, we infer that no vertex from $V(\Phi)$ can be a neighbour of $x$, therefore two neighbours $b_{1}$ and $b_{2}$ are some inner vertices of $P_{03}$ and the two other neighbours $c_{1}$ and $c_{2}$ are inner vertices of $P_{25}$.
Consider the vertex disjoint paths
\[
	P_1=b_1P_{03}a_0a_1a_2P_{25}c_1 \qqand
	P_2=b_2P_{03}a_3a_4a_5P_{25}c_2\,.
\]
Since $b_1$ and $b_2$ as well as $c_1$ and $c_2$ have distance two on the cycle of length $4k$ in~$\Phi'$, both path lengths 
have the same parity and their lengths sum up to $4k-4$.
If both lengths are  odd,  one must have length at most $2k-3$ and, together with~$x$, this yields a short odd cycle.
If, on the other hand, both lengths are even, then the paths
\[
	P_1'=b_1P_{03}a_0a_5P_{25}c_2 \qqand
	P_2'=b_2P_{03}a_3a_2P_{25}c_1
\]
have odd length.
Since their lengths sum up to $4k-6$, together with $x$, this yields the existence of a short odd cycle. 
Therefore, every vertex of $G$ is joined to at most three vertices of $\Phi'$, which leads to the following contradiction
\[
	3n
	=
	4k\frac{3n}{4k}
	<
	\sum_{u\in V(\Phi')}|N_G(u)|
	=
	\sum_{x\in V}|N_G(x)\cap V(\Phi')|
	\leq 
	3|V|
	=
	3n\,.
\]
This concludes the proof of Lemma~\ref{lem:C6}.
\end{proof}

\subsection{Tetrahedra with odd faces}
In the next lemma we will show that graphs $G\in\cGnk$ contain 
no graph from the following family, which can be viewed as tetrahedra 
with three faces formed by cycles of length $2k+1$, i.e., a particular \emph{odd subdivision} 
of $K_4$ (see, e.g.,~\cite{Ge88}).

\begin{definition}[$(2k+1)$-tetrahedra]
Given $k\geq 2$ we denote by $\cTk$ the set of graphs $T$ consisting of
\begin{enumerate}[label=\rmlabel]
\item one cycle $C_T$ with three \emph{branch vertices} $a_T$, $b_T$, and $c_T\in V(C_T)$,
\item a \emph{center vertex} $z_T$, and 
\item internally vertex disjoint paths (called \emph{spokes}) 
$P_{az}$, $P_{bz}$, $P_{cz}$ connecting the branch vertices with the center. \end{enumerate}
Furthermore, we require that
each cycle in $T$ containing $z$ and exactly two of the branch vertices must have length $2k+1$ and two of the spokes have length at least two.
\end{definition}

It follows from the definition that for $T\in\cTk$ we have that 
the cycle $C_T$ has odd length and if $T\subseteq G$ for some $G\in\cGnk$, then $T$ 
consists of at least~$4k$ vertices. In fact, the length of $C_T$ equals the sum of the 
lengths of the three cycles containing~$z$ minus twice the sum of the lengths of the 
spokes. Since all three cycles containing~$z$ have an odd length, the length of $C_T$ must be 
odd as well. In particular, if $T\subseteq G$ for some $G\in\cGnk$, then the length of $C_T$ must be at 
least~$2k+1$. Summing up the lengths of all four cycles, counts every vertex twice, except the branch vertices and the center vertex, which are counted three times.
Consequently, 
\begin{equation}\label{eq:T>=4k}
	|V(T)|\geq \frac{1}{2}\big(4\cdot(2k+1)-4\big)=4k
\end{equation}
for every $T\in\cTk$ with $T\subseteq G$ for some $G\in\cGnk$.

We will also use the following further notation.
For a cycle containing distinct vertices~$u$,~$v$, and $w$ we denote by $P_{uvw}$
the unique path on the cycle with endvertices $u$ and~$w$ which contains $v$ and, 
similarly, we denote by $P_{u\overline{v}w}$ the path from $u$ to $w$ which does not 
contain $v$.

For a tetrahedron $T\in\cTk$ we denote by $C_{ab}$ the cycle containing $z$ and the two branch
vertices $a$ and $b$. Similarly, we define $C_{ac}$ and $C_{bc}$. 
Note that the union of two cycles, for instance $C_{ab}$ and $C_{ac}$, contains an even cycle 
\[
	C_{ab}\oplus C_{ac}=C_{ab}\cup C_{ac}-P_{az}
	= aP_{abz}zP_{zca}a\,,
\]
where $P_{abz}$ is a path on the cycle $C_{ab}$ and 
$P_{zca}$ a path on the cycle $C_{ac}$.
Clearly, the length of $C_{ab}\oplus C_{ac}$ equals
\begin{equation}\label{eq:oplusl}
	\l(C_{ab}\oplus C_{ac})
	=\l(C_{ab})+\l(C_{ac})-2\l(P_{az})
	=4k+2-2\l(P_{az})\,.
\end{equation}

\begin{lemma}\label{lem:3cycles}
For all integers $k\geq2$ and $n$ and for every $G\in\mathcal{G}_{n,k}$ we have that $G$ does not contain any $T\in\cTk$ as a (not necessarily induced) subgraph.
\end{lemma}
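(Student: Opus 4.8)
The plan is to argue by contradiction, assuming $T\in\cTk$ is a subgraph of some $G\in\cGnk$, and to reduce the lemma to a single combinatorial claim: \emph{no vertex $x\in V(G)$ has four or more neighbours in $V(T)$}. Granting this claim, since $\delta(G)>\frac{3n}{4k}$ and, by~\eqref{eq:T>=4k}, $|V(T)|\ge 4k$, double counting the edges of $G$ meeting $V(T)$ yields
\[
	3n\;=\;4k\cdot\tfrac{3n}{4k}\;\le\;|V(T)|\cdot\tfrac{3n}{4k}\;<\;\sum_{u\in V(T)}|N_G(u)|\;=\;\sum_{x\in V}|N_G(x)\cap V(T)|\;\le\;3|V|\;=\;3n\,,
\]
a contradiction, exactly as at the end of the proof of Lemma~\ref{lem:C6}.

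To prove the claim I would use the same basic mechanism as for the graph $\Phi'$ in Lemma~\ref{lem:C6}: if $u$ and $v$ are two neighbours of $x$ and there is a $u$--$v$ walk $W$ in $G$ of odd length at most $2k-3$, then $xuWvx$ is a closed walk of odd length at most $2k-1$ and hence contains a short odd cycle, contradicting $G\in\cGnk$. (In particular no two neighbours of $x$ are adjacent, as a triangle is a short odd cycle.) Since $T\subseteq G$ has no short odd cycle, every short odd cycle arising this way passes through $x$, so it suffices to find, given a supposed set of four neighbours of $x$ in $V(T)$, two of them joined by an odd walk \emph{inside $T$} of length at most $2k-3$. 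For this I record the following structural data, writing the spoke lengths as $\alpha\le\beta\le\gamma$ (so $\beta,\gamma\ge2$ by definition): the three arcs of $C_T$ between consecutive branch vertices have lengths $2k+1-\beta-\gamma$, $2k+1-\alpha-\gamma$, $2k+1-\alpha-\beta$; each of the cycles $C_{ab},C_{bc},C_{ca}$ has length $2k+1$, so $x$ has at most two neighbours on each of them and if there are two such they are at cycle-distance two; by~\eqref{eq:oplusl} the cycles $C_{ab}\oplus C_{ac}$, $C_{ab}\oplus C_{bc}$, $C_{ac}\oplus C_{bc}$ are even of lengths $4k+2-2\alpha$, $4k+2-2\beta$, $4k+2-2\gamma$; and from $\l(C_T)=3(2k+1)-2(\alpha+\beta+\gamma)\ge 2k+1$ we get $\alpha+\beta+\gamma\le 2k+1$, whence $\alpha\le 2k-3$.

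With this in hand I would classify the four neighbours of $x$ according to which of the six open regions of $T$ (three open arcs of $C_T$, three open spokes) or which of the four distinguished vertices $z,a,b,c$ they occupy. Recording, for each neighbour, how many of $C_{ab},C_{bc},C_{ca}$ contain it ($3$ for $z$, $2$ for a branch vertex or a spoke-interior vertex, $1$ for an arc-interior vertex) and summing — the total is at most $2+2+2=6$ — already forces a short list of configurations; in particular a short separate argument, bounding the odd $z$--$v$ distances via the arc lengths and the fact that two spokes have length at least two, rules out $z$ being a neighbour of $x$. For each surviving configuration one exhibits the required odd $u$--$v$ walk of length at most $2k-3$ by combining arcs of $C_T$ with spokes — each arc together with its two incident spokes closes up to a $(2k+1)$-cycle, and two arcs together with a spoke run around $C_T$ — using the parity information carried by the even cycles $C_{ij}\oplus C_{il}$ together with the bounds $\alpha+\beta+\gamma\le 2k+1$, $\beta,\gamma\ge2$, and $\l(C_T)\ge2k+1$.

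The heart of the proof, and the main obstacle, is this final case analysis. It is delicate because the configurations that survive the counting step are precisely the ``tight'' ones — four neighbours spread thinly over several arcs, or clustered near a single branch vertex — for which no single auxiliary cycle of $T$ immediately supplies a short odd connection; one has to pick the right pair of neighbours and the right combination of an arc, a spoke, and possibly a detour around part of $C_T$, and then check that the numerology ($\l(C_T)$ odd and at least $2k+1$, the arc lengths, two spokes of length at least two) forces the length of that odd walk down to at most $2k-3$. Once every configuration has been eliminated, the counting inequality above finishes the proof.
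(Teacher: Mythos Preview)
Your high-level architecture is exactly the paper's: assume $T\subseteq G$, show no vertex $x$ has four neighbours in $V(T)$, and finish by the same double count. The difficulty you flag at the end --- the ``tight'' configurations where no short odd connection is immediately visible --- is real, and the paper resolves it with an idea your proposal does not contain: it fixes $T$ among all tetrahedra in $G$ with $\ell(C_T)$ \emph{minimal}, and in the tight cases it does \emph{not} produce a short odd cycle at all, but instead uses $x$ together with an odd $(2k{-}1)$-path in $T$ to build a new $(2k{+}1)$-cycle and hence a new tetrahedron $T'\in\cTk$ with $\ell(C_{T'})<\ell(C_T)$, contradicting minimality.

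Concretely, in the case where four neighbours $y_1,y_2\in\mathring P_{abz}$, $y_3,y_4\in\mathring P_{acz}$ lie in $C_{ab}\cup C_{ac}$, the two odd paths $P_{14},P_{23}$ you would look at have lengths summing to $4k-2$, so both may equal $2k-1$; then $xy_2P_{23}y_3x$ is a $(2k{+}1)$-cycle, not a short one, and your mechanism stalls. The paper swaps this cycle in for $C_{ab}$, enlarging a spoke and shrinking $C_T$. The same device is used when $x$ is a branch vertex and in the ``three neighbours in two cycles'' subcase, where again the only odd connections available have length $2k-1$. Without choosing $T$ minimal you have no leverage in these cases. (The paper also invokes Lemma~\ref{lem:C6} once, to finish off the last subcase of four neighbours in two cycles; that ingredient is likewise absent from your plan.) So the gap is not the case analysis being long, but a missing structural tool: add ``choose $T$ with $\ell(C_T)$ minimal'' at the outset and be prepared, in each tight case, to replace one of the $(2k{+}1)$-cycles of $T$ by a $(2k{+}1)$-cycle through $x$.
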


\begin{proof}
Suppose, contrary to the assertion, that $G=(V,E)$ contains a graph from~$\cTk$. 
Fix that graph $T\in\cTk$ contained in $G$ having
the shortest length of $C_T$.
We shall prove that no vertex in $G$ can be joined to four vertices in $T$ and we will obtain a contradiction to the minimum degree assumption on $G$.

Suppose that there exists a vertex $x\in V$ such that $|N_G(x)\cap V(T)|\geq4$
and fix four of those neighbours.
Since $T$ consists of the union of three cycles of length $2k+1$ one of those cycles 
must contain exactly two of these neighbours.
This implies that we can either pick two of those cycles which contain the four neighbours (see Claim~\ref{claim:4on2} below), or we have at least two ways to pick two such cycles which contain exactly three neighbours (see Claim~\ref{claim:3on2} below).

Recall that the vertices on the spokes belong to two cycles and the center~$z$ belongs to all three cycles $C_{ab}$, $C_{ac}$, and $C_{bc}$.
If $z$ is a neighbour of $x$, then one more neighbour~$z'$ must be on a spoke, because it must have distance two from $z$ and $T$ has at least two spokes of length at least two.
This means that two cycles already have two neighbours $z$ and $z'$, and the third cycle already has one neighbour, namely~$z$.
Therefore there cannot be two more neighbours of $x$ in $T$.
A similar argument shows that at most two neighbours of $x$ can lie on all the spokes of 
$T$ all together.

Before we proceed to analyze the two cases, note that $x$ can also be a vertex in~$T$. 
It is easy to check that $x$ cannot be $z$, since it would have three neighbours on the three spokes, which we just excluded.
Furthermore, $x$ cannot be one of the branch vertices.
Indeed, suppose $x=a$.
Then three neighbours~$y_1,y_2,y_3$ of $a$ are placed at distance $1$ from~$a$ on~$P_{a\overline{z}b}$, $P_{az}$ and $P_{a\overline{z}c}$ respectively, and a neighbour $y_4$ can only be on $\mathring{P}_{b\overline{z}c}$, the interior of $P_{b\overline{z}c}$.
Consider the paths
\[
	P_{24}=y_2P_{az}zP_{zby_4}y_4\qqand
	P'_{24}=y_2P_{az}zP_{zcy_4}y_4\,.
\]

Since the subpaths $zP_{zby_4}y_4$ and $zP_{zcy_4}y_4$ cover the cycle $C_{bc}$, which has length $2k+1$, the lengths of the paths $P_{24}$ and $P'_{24}$ have different parity.
Suppose that~$P_{24}$ has odd length.
Let $P_{34}$ be the path $y_3P_{acy_4}y_4$ in $C_{ac}\oplus C_{bc}$.
Then both $P_{24}$ and~$P_{34}$ have length~$2k-1$, because
\[
	\l(P_{24})+\l(P_{34})
	=\l(C_{ac}\oplus C_{bc})-2
	\overset{\eqref{eq:oplusl}}{=}4k-2\l(P_{cz})
	\leq 4k-2
\]
and together with $x$ each of the paths $P_{24}$ and $P_{34}$ create an odd cycle. 
The graph obtained from $T$ by replacing the cycle $C_{ab}$ with the cycle 
$ay_2P_{24}y_4a$ of length $2k+1$ results in a graph $T'\in\cTk$,
with branch vertices $a$, $y_4$, and $c$ and center~$z$.
Since the spoke $P_{zb}$ of $T$ 
is replaced by the larger spoke $P_{zy_4}=zP_{zby_4}y_4$ in $T'$, we have that 
the cycle $C_{T'}$ has shorter length than $C_T$.
This contradicts the choice of $T\subseteq G$.

Summarizing the above, from now on we can assume that 
$x\in V\backslash\{z,a,b,c\}$. Moreover, if $x\in V(T)$, then $x$ lies in one of the 
cycles $C_{ab}$, $C_{ac}$, or $C_{bc}$ and two of the four neighbours of 
$x$ in $T$ must be direct neighbours on this cycle.
We now consider the aforementioned cases in Claim~\ref{claim:4on2} and Claim~\ref{claim:3on2} below.

\begin{claim}\label{claim:4on2}
	Four neighbours of $x$ in $T$ are not contained in only two of the
	cycles~$C_{ab}$,~$C_{ac}$, and~$C_{bc}$.
\end{claim}

Suppose $C_{ab}$ and $C_{ac}$ contain four neighbours of $x$.
Then the spoke $P_{az}$ shared by both cycles does not contain any neighbour of~$x$.
Let $y_{1}$, $y_{2}\in N_G(x)\cap \mathring{P}_{abz}$ and $y_{3}$, $y_{4}\in N_G(x)\cap \mathring{P}_{acz}$, where 
$y_1$ and $y_3$ are the neighbours of $x$ coming first on the respective paths ($P_{abz}$ and $P_{acz}$) starting at~$a$.
Consider the paths
\[P_{13}=y_1P_{zba}aP_{acz}y_3\qqand
P_{24}=y_2P_{abz}zP_{zca}y_4\,.\]
Since the neighbours in the same $(2k+1)$-cycle have distance two and $\l(C_{ab}\oplus C_{ac})$ is even, 
we infer that $P_{13}$ and $P_{24}$ have the same parity and
\[\l(P_{13})+\l(P_{24})=2(2k+1)-2\l(P_{az})-4\leq 4k-4\,.\]
If $P_{13}$ and $P_{24}$ have odd length, then one of them must have length at most $2k-3$, thus, together with $x$, it yields the existence of a short odd cycle.
This implies that~$P_{13}$ and $P_{24}$ have even length.
Consequently, the paths
\[
	P_{14}=y_1P_{zba}aP_{az}zP_{zca}y_4\qqand
	P_{23}=y_2P_{abz}zP_{za}aP_{acz}y_3
\]
have odd length and we have that
\[\l(P_{14})+\l(P_{23})=2(2k+1)-4=4k-2\,.\]
Therefore, because of the odd girth of $G$, they must have both length $2k-1$.

Suppose that one path, say $P_{14}$, has no endpoints inside the spokes $P_{bz}$ and $P_{cz}$ 
(here the branch vertices $b$ and $c$ are allowed to be neighbours of $x$) and $x$ itself is not a vertex 
of $P_{bz}$ and $P_{cz}$.
In this case consider the $(2k+1)$-cycle $C_{y_1c}$ given by $xy_1P_{14}y_4x$.
As a result the graph obtained from $T$ by replacing $C_{ac}$ with $C_{y_1c}$ is a graph $T'\in\cTk$ with $\l(C_{T'})<\l(C_{T})$, 
since the spoke $P_{za}$ is replaced by the longer spoke $P_{zy_1}=zP_{zab}y_1$. This contradicts the choice of $T$.
Furthermore, if $x$ would be on one of the spokes $P_{bz}$ or $P_{cz}$, then it must lie on $P_{bz}$ since otherwise $x$ would lie between $y_3$ and $y_4$ and then $y_4$ would be contained in the interior of $P_{cz}$, which we excluded here.
Consequently, we arrive at the situation that $y_1=b$ and both $y_2$ and $x$ are inside $P_{bz}$. 
Hence, the four neighbours of $x$ are also contained in the cycle $C_{ac}\oplus C_{bc}$, which also contains $P_{23}$. 
Next we consider the path
\[
	P_{14}'=y_1P_{y_1ca}y_4
\]
in $C_{ac}\oplus C_{bc}$. Since $\l(C_{ac}\oplus C_{bc})$ is even and $\l(P_{23})$ is odd
we have  
\[\l(P'_{14})=\l(C_{ac}\oplus C_{bc})-\l(P_{23})-4\] 
is also odd. Recalling, that $\l(P_{23})=2k-1$
we obtain 
\[
	\l(P'_{14})
	=
	2(2k+1) - 2\l(P_{cz}) -\l(P_{23}) - 4 
	=
	2k-1-2	\l(P_{cz})
	\leq
	2k-3\,.
\]
Hence, we arrive at the contradiction that $P'_{14}$ together with~$x$ yields a short odd cycle in~$G$.
Thus both of the paths $P_{13}$ and $P_{24}$ must have an end vertex on one of the spokes~$P_{bz}$ and $P_{cz}$.
If both  paths have an end vertex on the same spoke, say $P_{bz}$, then we can repeat 
the last argument (considering $P'_{14}$).

Therefore, it must be that both $P_{bz}$ and $P_{cz}$ contain one neighbour of $x$ each, namely~$y_2$ and $y_4$.
Since $y_2$ and $y_4$ are in the same $(2k+1)$-cycle $C_{bc}$, they also have distance two in~$T$.
This means that $T$ contains a path $y_1by_2zy_4$ which, together with $x$, results in cycle $xy_1by_2zy_4x$ of length six.
Note that  the diagonal $\{y_2,x\}$ is present.
Owing to Lemma~\ref{lem:C6} at least one of the other diagonals $\{y_1,z\}$ and $\{b,y_4\}$ must be an edge of $G$.
But both these edges are chords in cycles  ($C_{ab}$ and $C_{bc}$) of length $2k+1$, 
which contradicts the odd girth assumption on $G$. This concludes the proof of Claim~\ref{claim:4on2}.

\begin{claim}\label{claim:3on2}
Three neighbours of $x$ in $T$ are not contained in only two 
	of the cycles~$C_{ab}$,~$C_{ac}$, and $C_{bc}$.
\end{claim}
Let $T\subseteq G$ chosen in the beginning of the proof violate the claim. 
First, we will show that we may assume that $T$ also 
has the following properties:
\begin{enumerate}[label=\Alabel]
\item \label{it:A} all four neighbours of $x$ are contained in $C_T$, 
\item \label{it:B} the two cycles can be chosen in such a way,
	 that the spoke shared by them contains no neighbour of $x$ and has length at least two, and
\item \label{it:C} the cycle containing one neighbour of~$x$ has the property that this 
	neighbours is not one of the two branch vertices contained in that cycle. 
\end{enumerate}

Owing to Claim~\ref{claim:4on2} we know that any pair of two out of the three cycles $C_{ab}$, $C_{ac}$, and~$C_{bc}$ contains
at most three of the four neighbours of $x$ in $T$. Consequently, the spokes~$P_{az}$,~$P_{bz}$, and~$P_{cz}$ all together 
can contain at most one neighbour of~$x$. Suppose $v$ is a neighbour of $x$ on the spoke $P_{az}$. 
Since we already showed that~$z$ cannot be a neighbour of~$x$, property~\ref{it:A} 
follows, by showing that~$v$ is not contained in $\mathring{P}_{az}$, the interior of $P_{az}$.
If $v\neq a$, then the two neighbours $y_1$ and $y_2$ of $x$ contained in $C_{ab}$ and $C_{ac}$ would have distance two
from~$v$. Consequently, $v$ would have to be a neighbour of $a$ in $P_{az}$ and $y_1$ and $y_2$ would also have to be 
neighbours of~$a$ in $T$. Hence, replacing $a$ by $x$ would give a rise to a subgraph~$T'\in \cTk$ of~$G$, where $x$
is a branch vertex. This yields a contradiction as shown before Claim~\ref{claim:4on2} and, hence, property~\ref{it:A} 
must hold.

Furthermore, if none of the neighbours is a branch vertex, then 
one cycle would contain two neighbours and the other two would contain one neighbour.
Since at least two spokes have length at least two, we can select two cycles 
containing three neighbours in such a way that properties~\ref{it:B} and~\ref{it:C} hold.

If one neighbour is a branch vertex, say $b$, then the two cycles $C_{ab}$ and $C_{bc}$ contain two neighbours 
and $C_{ac}$ contains one neighbour of~$x$. In particular the spokes $P_{az}$ and $P_{cz}$ contain no neighbour 
and one of them has length at least two. This implies that we can select one of the cycles $C_{ab}$ or $C_{bc}$
together with $C_{ac}$ such that properties~\ref{it:B} and~\ref{it:C} also hold in this case. 

Without loss of generality, we may, therefore, assume that the cycle $C_{ab}$ contains two neighbours 
$y_1$ and $y_2\in P_{a\overline{z}b}\backslash\{a\}$ (where $y_1$ is closer to $a$ and $y_2$ is closer to $b$), 
that the cycle $C_{ac}$ contains one 
neighbour $y_3\in \mathring{P}_{a\overline{z}c}$, and that the spoke $P_{az}$ has length at least two. 
In $C_{ab}\oplus C_{ac}$ we consider the paths
\[
	P_{13}=y_1P_{bac}y_3\qqand
	P_{23}=y_2P_{abz}zP_{zca}y_3\,.
\]

Since $P_{az}$ has length at least two, we have that
\[
	\l(P_{13})+\l(P_{23})=2(2k+1)-2\l(P_{az})-2\leq 4k-4\,.
\]
Therefore, if $P_{13}$ and $P_{23}$ have odd length, then one has length at most $2k-3$ and, together with $x$, it yields the existence of a short odd cycle.
This implies that $P_{13}$ and $P_{23}$ have even length.
Consequently, the paths
\[
	P_{13}'=y_1P_{baz}zP_{zca}y_3
	\qqand
	P_{23}'=y_2P_{abz}zP_{zac}y_3
\]
have odd length, and we have that 
\[
	\l(P_{13}')+\l(P_{23}')=2(2k+1)-2=4k\,.
\]
Therefore, one of these paths, say $P'_{23}$ has length $2k-1$.
Set $C_{23}=xy_2P'_{23}y_3x$. 
The graph~$T'$ obtained from $T$ by replacing $C_{ab}$ with $C_{23}$ is a again member of $\cTk$.
Since the spoke $P_{az}$ is replaced by the longer spoke $P_{y_3z}=y_3P_{caz}z$, we have $\l(C_{T'})<\l(C_T)$
This contradicts the minimal choice of $T$, which concludes the proof of Claim~\ref{claim:3on2}.

Claim~\ref{claim:3on2} yields that every vertex $x$ in $G$ is joined to at most three vertices of $T$.
Recall that every $T\in\cTk$ with $T\subseteq G$ consists of at least $4k$ vertices (see~\eqref{eq:T>=4k}).
Similarly, as in the proof of Lemma~\ref{lem:C6}, we obtain the following contradiction
\[
	3n=4k\frac{3n}{4k}<\sum_{u\in V(T)}|N_{G}(u)|=\sum_{x\in V}|N_G(v)\cap V(T)|\leq 3|V|=3n\,.
\]
\end{proof}

\section{Proof of the main result}
\label{sec:pf_thm}
In this section we deduce Theorem~\ref{thm:main} from Lemmas~\ref{lem:C6} and~\ref{lem:3cycles}.

\begin{proof}[Proof of Theorem~\ref{thm:main}]
Let $G=(V,E)$ be a graph from $\mathcal{G}_{n,k}$. We may assume that~$G$ is not a bipartite graph and we will 
show that it is a blow-up of a $(2k+1)$-cycle.

First we observe that $G$ contains a cycle of length $2k+1$.
Indeed, suppose for a contradiction that for some $\l>k$ a cycle $C=a_0\dots a_{2\l}$ is a smallest odd cycle in~$G$.
Since $G$ is edge-maximal, the non-existence of the chord $\{a_0,a_{2k}\}$ is due to the fact that it creates an odd cycle of length at most $2k-1$.
Therefore~$a_0$ and~$a_{2k}$ are linked by an even path $P$ of length at most $2k-2$ which, together with the path 
$P'=a_{2k}a_{2k+1}\dots a_{2\l}a_0$ yields the existence of an odd closed walk and, hence, of an odd cycle, of length at most~$2\l-1$, which contradicts the minimal choice of~$C$.

Let $B$ be a vertex-maximal blow-up of a $(2k+1)$-cycle contained in~$G$. Let $A_0,\dots,A_{2k}$ be its vertex classes, 
labeled in such a way that every edge of $B$ is contained in $E_G(A_i,A_{i+1})$ for some $i\in \{0,\dots,2k\}$. Here and below addition in the indices of $A$ is taken modulo~$2k+1$. Clearly, the sets $A_0,\dots,A_{2k}$ are independent sets 
in~$G$.
We will show $B=G$.
Suppose, for a contradiction, that there exists a vertex $x\in V\backslash V(B)$.
Owing to the odd girth assumption on $G$, the vertex~$x$ can have neighbours in at most two of 
the vertex classes of~$B$ and if there are two such classes, then they must be of the form 
$A_{i-1}$ and $A_{i+1}$ for some $i=0,\dots,2k$.
The following claim, which follows from Lemma~\ref{lem:C6} shows that~$x$ can have neighbours in at most 
one of the vertex classes of~$B$.

\begin{claim}\label{claim:2neighbours}
If the neighbours of $x$ in $G$ belong to exactly two vertex classes $A_{i-1}$ and $A_{i+1}$, then $x\in A_i$.
\end{claim}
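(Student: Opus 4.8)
The plan is to argue by contradiction with the vertex-maximality of $B$. So I assume that $N_G(x)\cap V(B)$ meets exactly the two classes $A_{i-1}$ and $A_{i+1}$, and I fix neighbours $u\in N_G(x)\cap A_{i-1}$ and $w\in N_G(x)\cap A_{i+1}$ of $x$. The heart of the argument is the intermediate statement that $x$ must then be adjacent in $G$ to \emph{every} vertex of $A_{i-1}\cup A_{i+1}$. Granting this, I adjoin $x$ to the class $A_i$: consecutive classes of $B$ are already completely joined in $G$, and now every edge between $x$ and $A_{i-1}\cup A_{i+1}$ is present as well, so the partition $A_0,\dots,A_{i-1},A_i\cup\{x\},A_{i+1},\dots,A_{2k}$ describes a blow-up of $C_{2k+1}$ contained in $G$ with one more vertex than $B$. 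This contradicts the choice of $B$; equivalently, the only class into which $x$ could consistently be inserted is $A_i$, which is the content of the claim, and combined with $x\notin V(B)$ it forces $x$ to have neighbours in at most one vertex class.

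To establish the intermediate statement I would argue again by contradiction: suppose $x$ misses some $u'\in A_{i-1}$ (so $u'\neq u$, and there is nothing to prove if $|A_{i-1}|=1$). Since the homomorphism onto $C_{2k+1}$ is surjective, the classes $A_{i-2}$ and $A_i$ are non-empty, so I pick $t\in A_{i-2}$ and $v\in A_i$; as $k\geq 2$, the residues $i-2,i-1,i,i+1$ are pairwise distinct modulo $2k+1$, hence $x,u,t,u',v,w$ are six distinct vertices. The hexagon $x\,u\,t\,u'\,v\,w$ together with the chord $uv$ consists of edges of $G$, since consecutive classes of $B$ are joined completely and $xu,xw\in E(G)$. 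Every remaining pair among the six vertices is a non-edge: $xt$, $xv$, $xu'$ because $x$ has no neighbour in $A_{i-2}\cup A_i$ and $x\not\sim u'$ by assumption; $uu'$ because $A_{i-1}$ is independent in $G$; each of $uw$, $u'w$, $tv$ because such an edge would close a triangle through a vertex of the class lying between its endpoints ($A_i$ in the first two cases, $A_{i-1}$ in the third); and $tw$ because the edge $\{t,w\}$ together with the path of length $2k-2$ in $B$ from $t$ to $w$ that avoids $A_{i-1}$ and $A_i$ would give an odd closed walk of length $2k-1$, hence a short odd cycle. Thus $\{x,u,t,u',v,w\}$ induces a copy of $\Phi$, contradicting Lemma~\ref{lem:C6}. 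Hence $x$ is adjacent to all of $A_{i-1}$, and the symmetric argument with $A_{i+2}$ in place of $A_{i-2}$, using the hexagon $x\,w\,t'\,w'\,v\,u$ with $t'\in A_{i+2}$ and $w'\in A_{i+1}\setminus N_G(x)$, shows $x$ is adjacent to all of $A_{i+1}$ as well.

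The substantive step is this intermediate adjacency statement; the subsequent enlargement of $B$ is routine bookkeeping. Within it, the only mildly delicate verification is that the six chosen vertices induce \emph{exactly} $\Phi$ and not a graph with an extra chord, the critical point being the absence of the chord $tw$ between the outer classes $A_{i-2}$ and $A_{i+1}$, which rests on the two arcs of the $(2k+1)$-cycle joining these classes having lengths $3$ and $2k-2$. The case $k=2$ is the one to watch here: then the long arc degenerates to length $2$ and the forbidden cycle produced by $tw$ is simply a triangle, so the conclusion persists.
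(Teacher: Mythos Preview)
Your argument is correct and follows essentially the same route as the paper: both proofs reduce the claim to showing that $x$ is adjacent to every vertex of $A_{i-1}\cup A_{i+1}$, and both establish this by building the very same six-cycle (your $x\,u\,t\,u'\,v\,w$ is the paper's $x\,a_{i+1}\,a_i\,b_{i-1}\,a_{i-2}\,a_{i-1}$ traversed in the opposite direction), observing that the diagonal between the two $A_{i-1}$-vertex's neighbours in $A_i$ and $A_{i-1}$ is present while the diagonals $\{x,u'\}$ and $\{t,w\}$ are absent, and invoking Lemma~\ref{lem:C6}. Your explicit check that the non-diagonal chords are absent is slightly more thorough than the paper, which silently relies on the observation (made inside the proof of Lemma~\ref{lem:C6}) that any such chord would create a triangle.
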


Moreover, we will apply Lemma~\ref{lem:3cycles} to show that $x$ 
cannot have neighbours in only one class of~$B$. 

\begin{claim}\label{claim:1neighbour}
The neighbours of $x$ in $G$ cannot belong to exactly one vertex class $A_i$.
\end{claim}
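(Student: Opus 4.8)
The plan is to argue by contradiction: suppose the neighbours of $x$ lie in exactly one class $A_i$, say $A_0$, and from this build a tetrahedron $T\in\cTk$ inside $G$, contradicting Lemma~\ref{lem:3cycles}. First I would fix a neighbour $y\in N_G(x)\cap A_0$. The key structural input is that $B$ is a \emph{vertex-maximal} blow-up of $C_{2k+1}$; since $x\notin V(B)$, adding $x$ to the class $A_0$ must fail to produce a (subgraph of a) blow-up, but the only obstruction to placing $x$ in $A_0$ is that $x$ has a neighbour outside $A_1\cup A_{2k}$ — but we are assuming all neighbours of $x$ lie in $A_0$, so in fact $x$ together with any one of its $A_0$-neighbours is ``almost'' placeable. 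I would instead use the minimum degree: $\delta(G)>\frac{3n}{4k}$ forces $x$ to have many neighbours, all inside the single independent set $A_0$, hence $|A_0|>\frac{3n}{4k}$; I would similarly need a lower bound on the total size of several classes to locate the tetrahedron.

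The main construction step: pick two neighbours $y_1,y_2\in N_G(x)\cap A_0$ (there are at least two, in fact many). Using that $B$ restricted to the classes $A_1,\dots,A_{2k}$ still contains blow-up structure, I would find within $B$ a path from $y_1$ to $z$ and from $y_2$ to $z$ for a suitable ``center'' vertex, together with a cycle through $y_1$ and $y_2$, so that the resulting graph on $\{x,y_1,y_2,z,\dots\}$ is a $(2k+1)$-tetrahedron with branch vertices among $y_1,y_2$ and $x$ (or a third branch vertex obtained from a further neighbour). Concretely: the cycle $C_{2k+1}$ underlying $B$ passes through (representatives of) all classes; $x y_1$ and $x y_2$ are edges with $y_1,y_2\in A_0$, and since $A_0$ is independent the walk $y_1 \to A_1 \to \dots \to A_{2k} \to y_2$ around the blow-up together with $x$ produces a cycle of length $2k+1$ through $x$. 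Running two such walks in opposite ``directions'' around the blow-up, sharing the vertex $x$, and a third walk through a third neighbour, yields the three odd cycles meeting at $x$ — i.e.\ $x$ plays the role of the center $z_T$ (or one of the branch vertices), and the three $A_1$-to-$A_{2k}$ arcs are the spokes and $C_T$.

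The hard part will be ensuring that the three paths one extracts from $B$ are \emph{internally vertex-disjoint} and that the ``two spokes of length at least two'' requirement in the definition of $\cTk$ is met — a balanced blow-up with tiny classes could a priori force spokes of length one or force overlaps. I would handle disjointness by choosing the three neighbours of $x$ in $A_0$ together with the structure of $B$ carefully, possibly first passing to a blow-up where a particular class has size $\geq 2$ (which the degree bound guarantees for $A_0$, and an averaging argument guarantees for enough other classes, since $\sum|A_j| = |V(B)|$ and each class is nonempty while $x$ alone would otherwise be addable). If $2k+1\geq 5$ there is enough room around the cycle to route three disjoint arcs; the case $k=2$ (length $5$) is the tightest and may need a separate short check, exactly as the analogous extremal configurations $\Phi'$ and the tetrahedra were handled in Lemmas~\ref{lem:C6} and~\ref{lem:3cycles}. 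Once $T\in\cTk$ with $T\subseteq G$ is produced, Lemma~\ref{lem:3cycles} gives the contradiction and the claim follows.
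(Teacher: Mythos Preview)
Your proposal has a genuine gap: the tetrahedron you try to build does not have faces of length $2k+1$. If $y_1,y_2\in A_0$ are two neighbours of $x$, then the walk $y_1\to A_1\to\dots\to A_{2k}\to y_2$ has length $2k+1$, so together with the edges $xy_1$ and $xy_2$ you obtain a cycle of length $2k+3$, not $2k+1$. No routing through $B$ between two vertices of the \emph{same} class $A_0$ can give an odd path of length $2k-1$, so you cannot produce the required $(2k+1)$-cycles through $x$ this way. There is also a smaller issue: the hypothesis is only that $N_G(x)\cap V(B)\subseteq A_i$, not that all neighbours of $x$ lie in $A_i$; in particular nothing guarantees that $x$ has several neighbours in $A_i$, so the construction cannot start from ``pick two (or three) neighbours in $A_0$''.

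The missing idea is to exploit the \emph{edge-maximality} of $G$, not multiple neighbours in $A_i$. Fix one neighbour $a_i\in A_i$ of $x$ and one representative $a_j\in A_j$ for each $j\neq i$, giving a $(2k+1)$-cycle $C$ in $B$. Since $x$ is not adjacent to $a_{i-2}$ or $a_{i+2}$, maximality forces even paths $P_{a_{i-2}x}$ and $P_{xa_{i+2}}$ of length at most $2k-2$ in $G$; the short paths $a_{i-2}a_{i-1}a_ix$ and $xa_ia_{i+1}a_{i+2}$ then pin both lengths to exactly $2k-2$, and one obtains two $(2k+1)$-cycles through $a_i$. With $C$ as the outer cycle and $a_i$ (not $x$) as the center, the union $C\cup P_{a_{i-2}x}\cup P_{xa_{i+2}}$ is a tetrahedron in $\cTk$, after verifying that the pairwise intersections of the three cycles are paths ending at $a_i$ (a minimal choice of the two even paths handles this). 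This is where Lemma~\ref{lem:3cycles} applies; your construction never reaches a member of $\cTk$.
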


As a consequence every $x\in V\setminus V(B)$ has no neighbour in~$B$. Therefore,
$V\setminus V(B)$ would be  disconnected from $B$, which violates the edge-maximality of~$G$. Consequently,
$V\setminus V(B)=\emptyset$ and $G=B$, which (up to the verification of Claims~\ref{claim:2neighbours} 
and~\ref{claim:1neighbour}) concludes the proof of Theorem~\ref{thm:main}.
\end{proof}

\begin{proof}[Proof of Claim~\ref{claim:2neighbours}]
Let~$x\in V$ have neighbours $a_{i-1}\in A_{i-1}$ and $a_{i+1}\in A_{i+1}$. 
In order to show that $x\in A_i$, we shall prove that $x$ is joined to all the vertices from $A_{i-1}$ and to all the vertices from $A_{i+1}$.
Suppose that this is not the case and there is some vertex $b_{i-1}\in A_{i-1}$, which is not a neighbour of $x$. The argument for the other case, when there is such a vertex in $A_{i+1}$ is identical.

Fix vertices $a_{i-2}\in A_{i-2}$ and $a_i\in A_i$ arbitrarily.
This way we fixed a cycle 
\[
	C=xa_{i+1}a_ib_{i-1}a_{i-2}a_{i-1}x
\]
of length six in~$G$.
Owing to the choice of $b_{i-1}$ the diagonal $\{x,b_{i-1}\}$ is missing in~$C$.
Moreover, the diagonal $\{a_{i+1},a_{i-2}\}$ is also not present, since together with a path from 
$a_{i-2}$ to $a_{i+1}$ through the vertex classes $A_{i-3},\dots,A_1,A_0,A_{2k-1},\dots,A_{i+2}$ it would create
an odd cycle of length~$2k-1$. On the other hand, since $B$ is a blow-up, the edge $\{a_{i},a_{i-1}\}$ is contained 
in~$B\subseteq G$, which is a diagonal in~$C$. Consequently, precisely one diagonal of $C$ is present, 
which contradicts Lemma~\ref{lem:C6}. Therefore, such a vertex $b_{i-1}$ cannot exist, which yields the claim.
\end{proof}

We will appeal to Lemma~\ref{lem:3cycles} to verify Claim~\ref{claim:1neighbour}.

\begin{proof}[Proof Claim~\ref{claim:1neighbour}]
Let $\emptyset\neq N_G(x)\cap V(B)\subseteq A_i$ and fix some neighbour $a_i$ of $x$ in~$A_i$.
Moreover, for every $j\neq i$ fix a vertex $a_{j}\in A_{j}$ arbitrarily. Since $B$ is a blow-up of~$C_{2k+1}$
those vertices span a cycle $C=a_0a_1\dots a_{2k}a_0$ of length $2k+1$. Moreover, since $x$ has no neighbours in 
$A_{i-2}\cup A_{i+2}$, the vertex $x$ is neither joined to 
$a_{i-2}$ nor to $a_{i+2}$. 

The edge-maximality of~$G\in\cGnk$ implies the existence of paths $P_{a_{i-2}x}$ and~$P_{xa_{i+2}}$ in $G$ with
an even length of at most $2k-2$.  Under all choices of such paths we pick two which minimize the number of edges
together with $C$, i.e., we pick paths~$P_{a_{i-2}x}$ and~$P_{xa_{i+2}}$ of even length at most $2k-2$ such that 
\[
	E(C)\cup E(P_{a_{i-2}x})\cup E(P_{xa_{i+2}})
\]
has minimum cardinality and we set
\[
	T=C\cup P_{a_{i-2}x}\cup P_{xa_{i+2}}\subseteq G\,.
\]
We shall show that $T$ is a tetrahedron from $\cTk$ with center vertex $a_i$. Hence, Lemma~\ref{lem:3cycles}
gives rise to a contradiction and no such vertex $x$ can exist.

Owing to the path $xa_{i}a_{i-1}a_{i-2}$ of 
length three the path $P_{a_{i-2}x}$ must have length~$2k-2$.  
Similarly, $a_{i+2}a_{i+1}a_{i}x$ yields that $P_{xa_{i+2}}$ has length $2k-2$. 
Moreover,~$P_{a_{i-2}x}$ and $P_{a_{i+2}x}$ are disjoint 
from $\{a_{i-1},a_i,a_{i+1}\}$. We set 
\[
	C'= a_{i-2}P_{a_{i-2}x}xa_{i}a_{i-1}a_{i-2}
	\qqand
	C''=a_{i+2}a_{i+1}a_{i}xP_{xa_{i+2}}a_{i+2}\,.
\]
We just showed that $C'$ and $C''$ both have length $2k+1$. In order to show that 
$T$ is a tetrahedron we have to show that the cycles $C$, $C'$, and $C'$ intersect pairwise in spokes
with center $a_i$.

Consider the intersection~$P$ of the cycles $C'$ and $C''$. We will show that $P$ is a path 
with one end vertex being $a_i$. Indeed every vertex in $a\in V(P)\setminus\{a_i\}$ is a vertex in the paths
$P_{a_{i-2}x}$ and $P_{xa_{i+2}}$. Owing to the minimal choice of $P_{a_{i-2}x}$ and~$P_{xa_{i+2}}$
it suffices to show that $a$ has the same distance to $x$ in both paths.

Suppose the distances have different parity. This implies that the closed walks 
\[
	aP_{a_{i-2}x}xP_{xa_{i+2}}a
	\qqand
	a_ia_{i-1}a_{i-2}P_{a_{i-2}x}aP_{xa_{i+2}}a_{i+2}a_{i+1}a_{i}
\]
have odd length. Since those walks cover the edges (with multiplicity) of $C'$ and $C''$ with the only exception 
of $xa_i$, the sum of their lengths is $\l(C')+\l(C'')-2$. Hence, one of the closed walks would have an odd length 
of at most $2k-1$, which yields a contradiction.
If the distances between $a$ and $x$ are different, but have the same parity, then replacing the longer path by the shorter one in the corresponding cycle yields an odd cycle of length at most $2k-1$. This again contradicts the 
assumptions on~$G$ and, hence, $P=C'\cap C''$ is indeed a path with end vertex $a_i$.

In the same way one shows that $C\cap C'$ and $C\cap C''$ are paths with end vertex~$a_i$. Since those two paths
contain $a_ia_{i-1}a_{i-2}$ and $a_{i+2}a_{i+1}a_{i}$, respectively, their length is at least two.
Therefore,~$T$ is a tetrahedron from $\cTk$ with center $a_i$ and spokes $C'\cap C''$, $C\cap C'$, and~$C\cap C''$.
\end{proof}

\section{Conluding remarks}

\subsection*{Extremal case in Theorem~\ref{thm:main}}
A more careful analysis yields that the $n$-vertex graphs with odd girth at least $2k+1$ and 
minimum degree exactly $\frac{3n}{4k}$, which are not homomorphic to $C_{2k+1}$, are 
blow-up of the M\"obius ladder~$M_{4k}$. In fact, the proofs of Lemmas~\ref{lem:C6} and~\ref{lem:3cycles} can be adjusted in such 
a way that for maximal graphs $G$ with $\delta(G)\geq \frac{3n}{4k}$ they either exclude the existence of $\Phi$ resp.\ $T$ in~$G$ or they yield a copy of $M_{4k}$ in~$G$.
In the former case, one can repeat the proof of Theorem~\ref{thm:main} based on those lemmas and obtains that~$G$ is homomorphic to $C_{2k+1}$. In the latter case, one uses the degree assumption to deduce that $G$ is isomorphic 
to a blow-up of~$M_{4k}$. The details appear in the PhD-thesis of the first author.

\subsection*{Open questions}
It would be interesting to study the situation, when we further relax the degree condition in Theorem~\ref{thm:main}.
It seems plausible that if $G$ has odd girth at least~$2k+1$ and 
$\delta(G)\geq (\frac{3}{4k}-\eps)n$ for sufficiently small $\eps>0$, then the graph~$G$ is homomorphic to~$M_{4k}$.
In fact, this seems to be true until $\delta(G)> \frac{4n}{6k-1}$. At this point blow-ups of the $(6k-1)$-cycle with all chords connecting two vertices of distance~$2k$ in the cycle added, would show that this is best possible. For $k=2$ such a result was proved by Chen, Jin, and Koh~\cite{CJK97} and for $k=3$ it was obtained 
by Brandt and Ribe-Baumann~\cite{BRB09}.

More generally, for $\l\geq 2$ and $k\geq 3$ let $F_{\l,k}$ 
be the graph obtained from a cycle of length $(2k-1)(\l-1)+2$
by adding all chords which connect vertices with  distance of the form $j(2k-1)+1$ in the cycle 
for some $j=1,\dots,\lfloor (\l-1)/2\rfloor$. 
Note that $F_{2,k}=C_{2k+1}$ and~$F_{3,k}=M_{4k}$. For every $\l\geq 2$ the graph $F_{\l,k}$ is $\l$-regular, 
has odd girth $2k+1$, and it has chromatic number three. Moreover, $F_{\l+1,k}$ is not homomorphic 
to $F_{\l,k}$, but contains it as a subgraph. 

A possible generalization of 
the known results would be the following: \textsl{if an $n$-vertex graph~$G$ has odd girth at least $2k+1$ and minimum degree
bigger than $\frac{\l n}{(2k-1)(\l-1)+2}$, then it is homomorphic to $F_{{\l-1},k}$.} However, this is known to be false 
for $k=2$ and $\l>10$, since such a graph $G$ may contain a copy of the Gr\"otzsch graph which (due to having chromatic number four) is not homomorphically embeddable into any~$F_{\l,2}$. However, in some sense this is the only exception 
for that statement. In fact, with the additional condition $\chi(G)\leq 3$ the statement is known to be true 
for~$k=2$ (see, e.g.,~\cite{CJK97}).
To our knowledge it is not known if a similar phenomenon happens for~$k>2$ and it would be interesting to study this further.

The discussion above motivates the following question, which asks for an extension of the result of \L uczak for triangle-free graphs from~\cite{Lu06}. Note that for fixed~$k$ the degree of $F_{\l,k}$ divided by its number of vertices 
tends to~$\frac{1}{2k-1}$ as $\l\to\infty$. Is it true that every $n$-vertex graph with odd girth at least $2k+1$ and 
minimum degree at least $(\frac{1}{2k-1}+\eps)n$ can be mapped homomorphically into a graph~$H$ which also 
has odd girth at least $2k+1$ and $V(H)$ is bounded by a constant $C=C(\eps)$ independent of~$n$?
\L uczak proved this for~$k=2$ and we are not aware of a counterexample for larger~$k$.

\begin{bibdiv}
\begin{biblist}

\bib{ACH93}{article}{
   author={Albertson, Michael O.},
   author={Chan, Lily},
   author={Haas, Ruth},
   title={Independence and graph homomorphisms},
   journal={J. Graph Theory},
   volume={17},
   date={1993},
   number={5},
   pages={581--588},
   issn={0364-9024},
   review={\MR{1242175}},
   doi={10.1002/jgt.3190170503},
}

\bib{AES74}{article}{
   author={Andr{\'a}sfai, B.},
   author={Erd{\H{o}}s, P.},
   author={S{\'o}s, V. T.},
   title={On the connection between chromatic number, maximal clique and
   minimal degree of a graph},
   journal={Discrete Math.},
   volume={8},
   date={1974},
   pages={205--218},
   issn={0012-365X},
   review={\MR{0340075}},
}

\bib{Bo98}{book}{
   author={Bollob{\'a}s, B{\'e}la},
   title={Modern graph theory},
   series={Graduate Texts in Mathematics},
   volume={184},
   publisher={Springer-Verlag, New York},
   date={1998},
   pages={xiv+394},
   isbn={0-387-98488-7},
   review={\MR{1633290}},
   doi={10.1007/978-1-4612-0619-4},
}

\bib{BM08}{book}{
   author={Bondy, J. A.},
   author={Murty, U. S. R.},
   title={Graph theory},
   series={Graduate Texts in Mathematics},
   volume={244},
   publisher={Springer, New York},
   date={2008},
   pages={xii+651},
   isbn={978-1-84628-969-9},
   review={\MR{2368647}},
   doi={10.1007/978-1-84628-970-5},
}

\bib{Br02}{article}{
   author={Brandt, St.},
   title={A 4-colour problem for dense triangle-free graphs},
   note={Cycles and colourings (Star\'a Lesn\'a, 1999)},
   journal={Discrete Math.},
   volume={251},
   date={2002},
   number={1-3},
   pages={33--46},
   issn={0012-365X},
   review={\MR{1904587}},
   doi={10.1016/S0012-365X(01)00340-5},
}

\bib{BRB09}{article}{
   author={Brandt, St.},
   author={Ribe-Baumann, E.},
   title={Graphs of odd girth 7 with large degree},
   conference={
      title={European Conference on Combinatorics, Graph Theory and
      Applications (EuroComb 2009)},
   },
   book={
      series={Electron. Notes Discrete Math.},
      volume={34},
      publisher={Elsevier Sci. B. V., Amsterdam},
   },
   date={2009},
   pages={89--93},
   review={\MR{2591423}},
   doi={10.1016/j.endm.2009.07.015},
}

\bib{BT}{article}{
   author={Brandt, St.},
   author={Thomass\'e, St.},
   title={Dense triangle-free graphs are four
	  colorable: A solution to the {E}rd{\H o}s-{S}imonovits problem},
      note={To appear},
}

\bib{CJK97}{article}{
   author={Chen, C. C.},
   author={Jin, G. P.},
   author={Koh, K. M.},
   title={Triangle-free graphs with large degree},
   journal={Combin. Probab. Comput.},
   volume={6},
   date={1997},
   number={4},
   pages={381--396},
   issn={0963-5483},
   review={\MR{1483425}},
   doi={10.1017/S0963548397003167},
}

\bib{Di10}{book}{
   author={Diestel, Reinhard},
   title={Graph theory},
   series={Graduate Texts in Mathematics},
   volume={173},
   edition={4},
   publisher={Springer, Heidelberg},
   date={2010},
   pages={xviii+437},
   isbn={978-3-642-14278-9},
   review={\MR{2744811}},
   doi={10.1007/978-3-642-14279-6},
}

\bib{ES73}{article}{
   author={Erd{\H{o}}s, P.},
   author={Simonovits, M.},
   title={On a valence problem in extremal graph theory},
   journal={Discrete Math.},
   volume={5},
   date={1973},
   pages={323--334},
   issn={0012-365X},
   review={\MR{0342429}},
}

\bib{Ge88}{article}{
   author={Gerards, A. M. H.},
   title={Homomorphisms of graphs into odd cycles},
   journal={J. Graph Theory},
   volume={12},
   date={1988},
   number={1},
   pages={73--83},
   issn={0364-9024},
   review={\MR{928737}},
   doi={10.1002/jgt.3190120108},
}

\bib{GH67}{article}{
   author={Guy, Richard K.},
   author={Harary, Frank},
   title={On the M\"obius ladders},
   journal={Canad. Math. Bull.},
   volume={10},
   date={1967},
   pages={493--496},
   issn={0008-4395},
   review={\MR{0224499}},
}

\bib{Hae82}{article}{
   author={H{\"a}ggkvist, Roland},
   title={Odd cycles of specified length in nonbipartite graphs},
   conference={
      title={Graph theory},
      address={Cambridge},
      date={1981},
   },
   book={
      series={North-Holland Math. Stud.},
      volume={62},
      publisher={North-Holland, Amsterdam-New York},
   },
   date={1982},
   pages={89--99},
   review={\MR{671908}},
}

\bib{HJ98}{article}{
   author={H{\"a}ggkvist, Roland},
   author={Jin, Guoping},
   title={Graphs with odd girth at least seven and high minimum degree},
   journal={Graphs Combin.},
   volume={14},
   date={1998},
   number={4},
   pages={351--362},
   issn={0911-0119},
   review={\MR{1658853}},
   doi={10.1007/s003730050023},
}

\bib{Ji95}{article}{
   author={Jin, Guo Ping},
   title={Triangle-free four-chromatic graphs},
   journal={Discrete Math.},
   volume={145},
   date={1995},
   number={1-3},
   pages={151--170},
   issn={0012-365X},
   review={\MR{1356592}},
   doi={10.1016/0012-365X(94)00063-O},
}

\bib{Lu06}{article}{
   author={{\L}uczak, Tomasz},
   title={On the structure of triangle-free graphs of large minimum degree},
   journal={Combinatorica},
   volume={26},
   date={2006},
   number={4},
   pages={489--493},
   issn={0209-9683},
   review={\MR{2260851}},
   doi={10.1007/s00493-006-0028-8},
}

\bib{Th02}{article}{
   author={Thomassen, Carsten},
   title={On the chromatic number of triangle-free graphs of large minimum
   degree},
   journal={Combinatorica},
   volume={22},
   date={2002},
   number={4},
   pages={591--596},
   issn={0209-9683},
   review={\MR{1956996}},
   doi={10.1007/s00493-002-0009-5},
}
 
\end{biblist}
\end{bibdiv}

\end{document}